\newtheorem{theorem}{Theorem}
\theoremstyle{plain}
\newtheorem{corollary}{Corollary}
\newtheorem{definition}{Definition}
\newtheorem{lemma}{Lemma}
\newtheorem{proposition}{Proposition}
\newtheorem{remark}{Remark}
\numberwithin{equation}{section}
\begin{document}
\title[New Hadamard-Type Inequalities]{THE HADAMARD TYPE INEQUALITIES FOR $m-
$CONVEX FUNCTIONS}
\author{\c{C}etin Y\i ld\i z$^{\blacksquare ,\clubsuit }$}
\address{$^{\blacksquare }$ATAT\"{U}RK UNIVERSITY, K. K. EDUCATION FACULTY,
DEPARTMENT OF MATHEMATICS, 25240, CAMPUS, ERZURUM, TURKEY}
\email{yildizcetiin@yahoo.com}
\author{Mustafa G\"{u}rb\"{u}z$^{\bigstar }$}
\address{$^{\bigstar }$GRADUATE SCHOOL OF NATURAL AND APPLIED SCIENCES, A%
\u{G}RI \.{I}BRAH\.{I}M \c{C}E\c{C}EN UNIVERSITY, A\u{G}RI, TURKEY}
\email{mgurbuz@agri.edu.tr}
\author{Ahmet Ocak Akdemir$^{\spadesuit }$}
\address{$^{\spadesuit }$A\u{G}RI \.{I}BRAH\.{I}M \c{C}E\c{C}EN UN\.{I}%
VERSITY, FACULTY OF SCIENCE AND ARTS, DEPARTMENT OF MATHEMATICS, 04100,
CAMPUS, A\u{G}RI, TURKEY}
\email{ahmetakdemir@agri.edu.tr}
\date{June 04, 2010}
\subjclass[2000]{Primary 26D07,26D15,26A51}
\keywords{Hermite-Hadamard's inequality, $m-$Convex, H\"{o}lder inequality,
Power mean inequality\\
$^{\clubsuit }$corresponding author}

\begin{abstract}
In this paper we obtained some new Hadamard-Type inequalities for functions
whose derivatives absolute values $m-$convex. Some applications to special
means of real\ numbers are given.
\end{abstract}

\maketitle

\section{INTRODUCTION}

Let $f:I\subset 
\mathbb{R}
\rightarrow 
\mathbb{R}
$ be a convex function defined on the interval $I$ of real numbers and a,b$%
\in I$, with $a<b$. The following inequality is well known as the
Hermite-Hadamard inequality for convex functions,%
\begin{equation*}
f\left( \frac{a+b}{2}\right) \leq \frac{1}{b-a}\int_{a}^{b}f(x)dx\leq \frac{%
f(a)+f(b)}{2}.
\end{equation*}

In recent years many authors have established several inequalities connected
to Hermite-Hadamard's\textit{\ }inequality. For recent results, refinements,
counterparts, generalizations and new\textit{\ }Hadamard-Type inequalities
see \cite{SSD}, \cite{USK} and \cite{YHT}.

A function $f:I\rightarrow 
\mathbb{R}
$ is said to be convex function on $I$ if the inequality%
\begin{equation*}
f(tx+(1-t)y)\leq tf(x)+(1-t)f(y),
\end{equation*}%
holds for all $x,y\in I$ and $t\in \lbrack 0,1].$

In \cite{TODER}, G. Toader defined $m-$convexity as the following:

\begin{definition}
The function $f:[0,b]\rightarrow 
\mathbb{R}
,$ $b>0$, is said to be $m-$convex where $m\in \lbrack 0,1],$ if we have%
\begin{equation*}
f(tx+m(1-t)y)\leq tf(x)+m(1-t)f(y)
\end{equation*}%
for all $x,y\in \lbrack 0,b]$ and $t\in \lbrack 0,1].$ We say that f is
m-concave if -f is m-convex.
\end{definition}

For recent results related to above definitions we refer interest of readers
to \cite{mkb},\cite{drgtd},\cite{ssdp}.

The following theorems which was obtained by Kavurmac\i\ et al. contains the
Hadamard-Type integral inequalities in \cite{HME}.

\begin{theorem}
Let $f:I\subset 
\mathbb{R}
\rightarrow 
\mathbb{R}
$ be a differentiable mapping on $I^{o}$ such that $f^{\prime }\in L[a,b],$
where $a,b\in I$ with $a<b.$ If $\left\vert f^{\prime }\right\vert $ is
convex function on $[a,b],$ then the following inequality holds:%
\begin{equation}
\left\vert \frac{f(a)+f(b)}{2}-\frac{1}{b-a}\dint\limits_{a}^{b}f(u)du\right%
\vert \leq \frac{b-a}{12}\left[ \left\vert f^{\prime }\left( \frac{a+b}{2}%
\right) \right\vert +\left\vert f^{\prime }\left( a\right) \right\vert
+\left\vert f^{\prime }\left( b\right) \right\vert \right] .  \label{1}
\end{equation}
\end{theorem}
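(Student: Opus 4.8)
The plan is to reduce the inequality to a weighted integral of $f^{\prime }$ via a Montgomery-type identity and then estimate using the convexity of $\left\vert f^{\prime }\right\vert $. First I would establish, under the stated hypotheses, the representation
\[
\frac{f(a)+f(b)}{2}-\frac{1}{b-a}\int_{a}^{b}f(u)\,du=\frac{b-a}{2}\int_{0}^{1}(1-2t)\,f^{\prime }\!\left( ta+(1-t)b\right) dt,
\]
which follows by the change of variable $u=ta+(1-t)b$ together with a single integration by parts applied to $\int_{a}^{b}(2u-a-b)f^{\prime }(u)\,du$. This identity is the structural heart of the argument: it converts the left-hand side into a quantity to which convexity of $\left\vert f^{\prime }\right\vert $ can be applied directly.

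Next I would take absolute values and split the integral at $t=\tfrac{1}{2}$, using that $1-2t\geq 0$ on $[0,\tfrac{1}{2}]$ and $1-2t\leq 0$ on $[\tfrac{1}{2},1]$. The key device — the one responsible for the appearance of the midpoint value $f^{\prime }\!\left( \tfrac{a+b}{2}\right) $ — is to rewrite the argument as a convex combination involving the midpoint on each piece: on $[0,\tfrac{1}{2}]$,
\[
ta+(1-t)b=2t\cdot\frac{a+b}{2}+(1-2t)\,b,
\]
and on $[\tfrac{1}{2},1]$,
\[
ta+(1-t)b=(2t-1)\,a+(2-2t)\cdot\frac{a+b}{2},
\]
both with nonnegative coefficients summing to $1$. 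Applying the convexity of $\left\vert f^{\prime }\right\vert $ to each combination then bounds $\left\vert f^{\prime }(ta+(1-t)b)\right\vert $ above by an affine function of $\left\vert f^{\prime }(a)\right\vert $, $\left\vert f^{\prime }(b)\right\vert $, $\left\vert f^{\prime }(\tfrac{a+b}{2})\right\vert $ with coefficients linear in $t$.

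Finally I would carry out the resulting elementary integrations: the coefficient of $\left\vert f^{\prime }(\tfrac{a+b}{2})\right\vert $ is $\int_{0}^{1/2}2t(1-2t)\,dt+\int_{1/2}^{1}(2t-1)(2-2t)\,dt=\tfrac{1}{12}+\tfrac{1}{12}$, while those of $\left\vert f^{\prime }(b)\right\vert $ and $\left\vert f^{\prime }(a)\right\vert $ are $\int_{0}^{1/2}(1-2t)^{2}\,dt=\tfrac{1}{6}$ and $\int_{1/2}^{1}(2t-1)^{2}\,dt=\tfrac{1}{6}$ respectively. Multiplying by the prefactor $\tfrac{b-a}{2}$ and collecting terms yields precisely $\tfrac{b-a}{12}\big[\left\vert f^{\prime }(\tfrac{a+b}{2})\right\vert +\left\vert f^{\prime }(a)\right\vert +\left\vert f^{\prime }(b)\right\vert \big]$, as desired.

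I do not anticipate a genuine obstacle: once the identity is in hand the proof is routine bookkeeping. The only delicate points are keeping the signs of $1-2t$ straight when moving the absolute value inside the two subintegrals, and observing that the midpoint split replaces the classical trapezoid constant $\int_{0}^{1}\left\vert 1-2t\right\vert t\,dt=\tfrac{1}{4}$ by the pair $\tfrac{1}{12},\tfrac{1}{6}$, which is exactly what produces the three-term bound with constant $\tfrac{1}{12}$.
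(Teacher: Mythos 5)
Your proof is correct: the identity, the sign handling at $t=\tfrac{1}{2}$, the two convex-combination rewrites through the midpoint, and the moments $\tfrac{1}{12},\tfrac{1}{12},\tfrac{1}{6},\tfrac{1}{6}$ all check out and combine to the stated bound. The route, however, is genuinely different from the paper's. The paper never works with the single kernel $(1-2t)$ on the whole segment; it starts from Lemma 1, an identity with a free evaluation point $x$ expressing $\frac{(b-x)f(b)+(x-a)f(a)}{b-a}-\frac{1}{b-a}\int_{a}^{b}f(u)\,du$ as a sum of two integrals with kernels $(1-t)$ and $(t-1)$ and arguments $tx+(1-t)a$ and $tx+(1-t)b$, then proves the more general $m$-convex estimate (Theorem 3) by applying convexity directly inside each integral, and finally recovers inequality (1) by the specializations $x=\frac{a+b}{2}$ (Corollary 1) and $m=1$ (Remark 1). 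After setting $x=\frac{a+b}{2}$ and changing variables, the paper's two half-integrals are exactly your two pieces over $[0,\tfrac{1}{2}]$ and $[\tfrac{1}{2},1]$, so the underlying computation is the same; the difference is that yours is a self-contained direct proof of (1) from the classical trapezoid identity plus a midpoint-splitting trick, whereas the paper's lemma with a free point $x$ buys the weighted-endpoint and $m$-convex generalizations of which (1) appears only as a corollary.
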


\begin{theorem}
Let $f:I\subset 
\mathbb{R}
\rightarrow 
\mathbb{R}
$ be a differentiable mapping on $I^{o}$ such that $f^{\prime }\in L[a,b],$
where $a,b\in I$ with $a<b.$ If $\left\vert f^{\prime }\right\vert ^{q}$ is
convex function on $[a,b],$ for some fixed $q\geq 1,$ then the following
inequality holds:%
\begin{eqnarray}
&&\left\vert \frac{f(a)+f(b)}{2}-\frac{1}{b-a}\dint\limits_{a}^{b}f(u)du%
\right\vert  \label{2} \\
&\leq &\frac{b-a}{8}\left[ \left( \frac{\left\vert f^{\prime }\left( \frac{%
a+b}{2}\right) \right\vert ^{q}+2\left\vert f^{\prime }\left( a\right)
\right\vert ^{q}}{3}\right) ^{\frac{1}{q}}+\left( \frac{\left\vert f^{\prime
}\left( \frac{a+b}{2}\right) \right\vert ^{q}+2\left\vert f^{\prime
}(b)\right\vert ^{q}}{3}\right) ^{\frac{1}{q}}\right] .  \notag
\end{eqnarray}
\end{theorem}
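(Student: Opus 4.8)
The plan is to follow the standard Dragomir--Agarwal scheme, started from the same representation identity that produces \eqref{1}. First I would recall (or re-derive, by integrating by parts on the two halves $[a,\tfrac{a+b}{2}]$, $[\tfrac{a+b}{2},b]$ and using the substitutions $x=ta+(1-t)\tfrac{a+b}{2}$ and $x=tb+(1-t)\tfrac{a+b}{2}$) the identity
\begin{equation*}
\frac{f(a)+f(b)}{2}-\frac{1}{b-a}\int_{a}^{b}f(x)\,dx=\frac{b-a}{4}\left[ \int_{0}^{1}t\,f^{\prime }\left( tb+(1-t)\tfrac{a+b}{2}\right) dt-\int_{0}^{1}t\,f^{\prime }\left( ta+(1-t)\tfrac{a+b}{2}\right) dt\right] .
\end{equation*}
Taking absolute values and using the triangle inequality then yields
\begin{equation*}
\left\vert \frac{f(a)+f(b)}{2}-\frac{1}{b-a}\int_{a}^{b}f(x)\,dx\right\vert \leq \frac{b-a}{4}\left[ \int_{0}^{1}t\left\vert f^{\prime }\left( tb+(1-t)\tfrac{a+b}{2}\right) \right\vert dt+\int_{0}^{1}t\left\vert f^{\prime }\left( ta+(1-t)\tfrac{a+b}{2}\right) \right\vert dt\right] .
\end{equation*}

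Next I would bound the two integrals symmetrically. Since $q\geq 1$, the power-mean inequality applies with the weight $t$: writing $t=t^{1-1/q}t^{1/q}$,
\begin{equation*}
\int_{0}^{1}t\left\vert f^{\prime }\left( tb+(1-t)\tfrac{a+b}{2}\right) \right\vert dt\leq \left( \int_{0}^{1}t\,dt\right) ^{1-\frac{1}{q}}\left( \int_{0}^{1}t\left\vert f^{\prime }\left( tb+(1-t)\tfrac{a+b}{2}\right) \right\vert ^{q}dt\right) ^{\frac{1}{q}},
\end{equation*}
and likewise with $a$ in place of $b$. Because $tb+(1-t)\tfrac{a+b}{2}$ is a convex combination of $b$ and $\tfrac{a+b}{2}$ (and $ta+(1-t)\tfrac{a+b}{2}$ one of $a$ and $\tfrac{a+b}{2}$), the assumed convexity of $\left\vert f^{\prime }\right\vert ^{q}$ gives
\begin{equation*}
\left\vert f^{\prime }\left( tb+(1-t)\tfrac{a+b}{2}\right) \right\vert ^{q}\leq t\left\vert f^{\prime }(b)\right\vert ^{q}+(1-t)\left\vert f^{\prime }\left( \tfrac{a+b}{2}\right) \right\vert ^{q},
\end{equation*}
and the analogous bound for the term involving $a$.

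It then only remains to evaluate the elementary moments $\int_{0}^{1}t\,dt=\tfrac{1}{2}$, $\int_{0}^{1}t^{2}\,dt=\tfrac{1}{3}$, $\int_{0}^{1}t(1-t)\,dt=\tfrac{1}{6}$. These give $\int_{0}^{1}t\left\vert f^{\prime }\left( tb+(1-t)\tfrac{a+b}{2}\right) \right\vert ^{q}dt\leq \tfrac{1}{2}\cdot\frac{\left\vert f^{\prime }(\tfrac{a+b}{2})\right\vert ^{q}+2\left\vert f^{\prime }(b)\right\vert ^{q}}{3}$; since $\left( \tfrac{1}{2}\right) ^{1-1/q}\left( \tfrac{1}{2}\right) ^{1/q}=\tfrac{1}{2}$, the outer factor $\tfrac{b-a}{4}$ collapses to $\tfrac{b-a}{8}$, and adding the two symmetric contributions reproduces exactly the right-hand side of \eqref{2}. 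Putting $q=1$ recovers \eqref{1}, which is a convenient sanity check.

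The only step that is not completely mechanical is the opening identity; once it is available, the chain triangle inequality, then power mean, then convexity of $\left\vert f^{\prime }\right\vert ^{q}$, then evaluation of $\int_{0}^{1}t^{k}\,dt$ is routine, and the sole point requiring a little care is to notice that the power-mean constants cancel so that the denominator $3$ in \eqref{2} emerges.
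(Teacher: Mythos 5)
Your proposal is correct and follows essentially the same route as the paper: your opening identity is exactly the paper's Lemma 1 specialized to $x=\frac{a+b}{2}$ (after the substitution $t\mapsto 1-t$ in each integral), and the subsequent chain — triangle inequality, power-mean inequality with the linear weight, convexity of $\left\vert f^{\prime }\right\vert ^{q}$, and evaluation of the moments $\frac{1}{2},\frac{1}{3},\frac{1}{6}$ — mirrors the paper's proof of Theorem 5, from which (\ref{2}) is recovered by taking $x=\frac{a+b}{2}$ and $m=1$ in Corollary 3. The only difference is that you prove the special case directly rather than passing through the general parameter $x$ and $m$-convexity, which changes nothing essential.
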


The main purpose of this paper is to establish refinements inequalities of
right-hand side of Hadamard's type for $m-$convex functions.

\section{MAIN RESULTS}

In \cite{HME}, in order to prove some inequalities related to
Hermite-Hadamard's inequality Kavurmac\i\ et al.\ used the following lemma.

\begin{lemma}
Let $f:I\subset 
\mathbb{R}
\rightarrow 
\mathbb{R}
$ be a differentiable mapping on $I^{o}$ where $a,b\in I$ with $a<b.$ If $%
f^{\prime }\in L[a,b],$ then the following equality holds:%
\begin{eqnarray*}
&&\frac{(b-x)f(b)+(x-a)f(a)}{b-a}-\frac{1}{b-a}\overset{b}{\underset{a}{\int 
}}f(u)du \\
&=&\frac{(x-a)^{2}}{b-a}\underset{a}{\overset{b}{\int }}(1-t)f^{\prime
}(tx+(1-t)a)dt+\frac{(b-x)^{2}}{b-a}\overset{b}{\underset{a}{\int }}%
(t-1)f^{\prime }(tx+(1-t)b)dt.
\end{eqnarray*}
\end{lemma}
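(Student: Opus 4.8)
The plan is to establish the identity by evaluating the right-hand side, working on the two integrals separately via integration by parts. Write the right-hand side as $\frac{(x-a)^{2}}{b-a}I_{1}+\frac{(b-x)^{2}}{b-a}I_{2}$, where the integrals are over $t\in[0,1]$, $I_{1}=\int_{0}^{1}(1-t)f^{\prime}(tx+(1-t)a)\,dt$ and $I_{2}=\int_{0}^{1}(t-1)f^{\prime}(tx+(1-t)b)\,dt$. The only analytic input required is that $f$ is differentiable with $f^{\prime}\in L[a,b]$, which is exactly what makes the integration by parts legitimate.

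For $I_{1}$, note that $\frac{d}{dt}f(tx+(1-t)a)=(x-a)f^{\prime}(tx+(1-t)a)$, so $\frac{1}{x-a}f(tx+(1-t)a)$ is an antiderivative in $t$ of the factor $f^{\prime}(tx+(1-t)a)$. Integrating by parts with $u=1-t$ and $dv=f^{\prime}(tx+(1-t)a)\,dt$ produces the boundary term $\left[\frac{1-t}{x-a}f(tx+(1-t)a)\right]_{0}^{1}$, which collapses to a single evaluation at $t=0$ because the weight $1-t$ annihilates the endpoint $t=1$; this yields a contribution proportional to $f(a)$. The leftover integral $\frac{1}{x-a}\int_{0}^{1}f(tx+(1-t)a)\,dt$ is then turned into $\frac{1}{(x-a)^{2}}\int_{a}^{x}f(u)\,du$ by the affine substitution $u=tx+(1-t)a$, $du=(x-a)\,dt$. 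Multiplying through by $\frac{(x-a)^{2}}{b-a}$ gives a term of the form $\pm\frac{(x-a)f(a)}{b-a}$ together with $\pm\frac{1}{b-a}\int_{a}^{x}f(u)\,du$.

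The integral $I_{2}$ is treated in exactly the same way with $b$ in place of $a$: an antiderivative of $f^{\prime}(tx+(1-t)b)$ is $\frac{1}{x-b}f(tx+(1-t)b)$, the weight $t-1$ again kills the $t=1$ endpoint leaving a term in $f(b)$, the substitution $u=tx+(1-t)b$ converts the residual integral into $\frac{1}{(x-b)^{2}}\int_{b}^{x}f(u)\,du$, and since $(b-x)^{2}=(x-b)^{2}$ the prefactor simplifies cleanly. Adding the two processed contributions, the boundary pieces assemble into $\frac{(b-x)f(b)+(x-a)f(a)}{b-a}$ and the two partial integrals merge via $\int_{a}^{x}+\int_{x}^{b}=\int_{a}^{b}$ into $-\frac{1}{b-a}\int_{a}^{b}f(u)\,du$, which is precisely the left-hand side.

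I do not expect a substantive obstacle: the argument is purely mechanical. The one point that demands care is the bookkeeping of signs — in particular that $x-a>0$ while $x-b<0$, so the two antiderivatives $\frac{1}{x-a}f(\cdot)$ and $\frac{1}{x-b}f(\cdot)$ enter with opposite effective signs, and one must keep the orientations of $\int_{a}^{x}$ and $\int_{b}^{x}$ consistent before combining them. The vanishing of the boundary terms at $t=1$, forced by the factors $1-t$ and $t-1$, is what keeps the endpoint contributions down to the clean $f(a)$, $f(b)$ expressions.
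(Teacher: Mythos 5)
The paper itself contains no proof of this lemma (it is quoted from reference \cite{HME}), and your route --- integration by parts in $t$ with the weight as the algebraic factor, followed by the affine substitution $u=tx+(1-t)a$ (resp.\ $u=tx+(1-t)b$) --- is exactly the standard argument by which it is proved there. So the method is the right one. The problem is that you never actually resolve the signs: you carry them as ``$\pm$'' and then assert that the pieces ``assemble into precisely the left-hand side.'' That deferred bookkeeping is exactly where the statement, as printed here, fails. Carrying it out: with $I_{1}=\int_{0}^{1}(1-t)f^{\prime}(tx+(1-t)a)\,dt$ the boundary term is $\bigl[\tfrac{1-t}{x-a}f(tx+(1-t)a)\bigr]_{0}^{1}=-\tfrac{f(a)}{x-a}$ and the leftover integral enters with a plus sign, so
\begin{equation*}
\frac{(x-a)^{2}}{b-a}\,I_{1}=-\frac{(x-a)f(a)}{b-a}+\frac{1}{b-a}\int_{a}^{x}f(u)\,du,
\qquad
\frac{(b-x)^{2}}{b-a}\,I_{2}=-\frac{(b-x)f(b)}{b-a}+\frac{1}{b-a}\int_{x}^{b}f(u)\,du,
\end{equation*}
and adding gives the \emph{negative} of the stated left-hand side. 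A quick check confirms this: for $f(u)=u^{2}$, $a=0$, $b=1$, $x=\tfrac12$, the left-hand side equals $\tfrac16$ while the right-hand side as printed equals $-\tfrac16$.

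So the identity in the form quoted (kernel $(1-t)$ against the $a$-integral and $(t-1)$ against the $b$-integral; also the limits should be $0$ and $1$, not $a$ and $b$) is off by a sign: the correct version, which is the one in \cite{HME} and the one effectively used later in this paper (where only $|t-1|=1-t$ survives after taking absolute values), pairs $(t-1)$ with $f^{\prime}(tx+(1-t)a)$ and $(1-t)$ with $f^{\prime}(tx+(1-t)b)$. Your write-up should either prove that corrected statement explicitly or flag the transcription error; as it stands, the claim that the boundary pieces assemble into $\tfrac{(b-x)f(b)+(x-a)f(a)}{b-a}$ and the integrals into $-\tfrac{1}{b-a}\int_{a}^{b}f(u)\,du$ is not what your own computation yields, and the ``$\pm$'' notation conceals the one step you yourself identified as the crux.
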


\begin{theorem}
Let $f:I\subset 
\mathbb{R}
\rightarrow 
\mathbb{R}
$ be a differentiable mapping on $I^{o}$ where $a,b\in I$ with $a<b.$ If $%
\left\vert f^{\prime }\right\vert $ is $m-$convex function on $[a,b]$ for
some fixed $m\in (0,1],$ then the following inequality holds:%
\begin{eqnarray*}
&&\left\vert \frac{(b-x)f(b)+(x-a)f(a)}{b-a}-\frac{1}{b-a}%
\dint\limits_{a}^{b}f(u)du\right\vert \\
&\leq &\frac{(x-a)^{2}}{b-a}\left[ \frac{\left\vert f^{\prime
}(x)\right\vert +2m\left\vert f^{\prime }(\frac{a}{m})\right\vert }{6}\right]
+\frac{(b-x)^{2}}{b-a}\left[ \frac{\left\vert f^{\prime }(x)\right\vert
+2m\left\vert f^{\prime }(\frac{b}{m})\right\vert }{6}\right]
\end{eqnarray*}%
where $m\in (0,1].$
\end{theorem}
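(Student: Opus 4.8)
The plan is to start from the integral identity in Lemma~1, apply the triangle inequality to split the right-hand side into two integrals, and then estimate each integral using the $m$-convexity of $\left\vert f^{\prime }\right\vert $. Taking absolute values in the Lemma and noting that $\left\vert 1-t\right\vert =\left\vert t-1\right\vert =1-t$ for $t\in \lbrack 0,1]$, I would obtain
\begin{eqnarray*}
&&\left\vert \frac{(b-x)f(b)+(x-a)f(a)}{b-a}-\frac{1}{b-a}\dint\limits_{a}^{b}f(u)du\right\vert \\
&\leq &\frac{(x-a)^{2}}{b-a}\int_{0}^{1}(1-t)\left\vert f^{\prime }(tx+(1-t)a)\right\vert dt+\frac{(b-x)^{2}}{b-a}\int_{0}^{1}(1-t)\left\vert f^{\prime }(tx+(1-t)b)\right\vert dt.
\end{eqnarray*}

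The key step is to rewrite the arguments of $f^{\prime }$ so that the definition of $m$-convexity becomes applicable: since $m\in (0,1]$, one has $tx+(1-t)a=tx+m(1-t)\frac{a}{m}$, and hence by $m$-convexity of $\left\vert f^{\prime }\right\vert $,
\begin{equation*}
\left\vert f^{\prime }(tx+(1-t)a)\right\vert \leq t\left\vert f^{\prime }(x)\right\vert +m(1-t)\left\vert f^{\prime }\left( \tfrac{a}{m}\right) \right\vert ,
\end{equation*}
and similarly with $b$ in place of $a$. Substituting these two bounds into the two integrals above and evaluating the elementary integrals $\int_{0}^{1}t(1-t)\,dt=\frac{1}{6}$ and $\int_{0}^{1}(1-t)^{2}\,dt=\frac{1}{3}$, each integral collapses to $\frac{\left\vert f^{\prime }(x)\right\vert }{6}+\frac{m\left\vert f^{\prime }(a/m)\right\vert }{3}=\frac{\left\vert f^{\prime }(x)\right\vert +2m\left\vert f^{\prime }(a/m)\right\vert }{6}$ (respectively with $b$), which is exactly the asserted right-hand side.

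I do not expect a genuine obstacle in this argument; it is essentially the triangle inequality followed by two one-line integral computations. The only point that requires care is the rewriting $a=m\cdot (a/m)$ used to invoke $m$-convexity: this needs $m>0$, which is why the hypothesis is $m\in (0,1]$ rather than $m\in \lbrack 0,1]$, and it tacitly presumes that $\frac{a}{m}$ (and $\frac{b}{m}$) lie in a domain on which the $m$-convexity estimate for $\left\vert f^{\prime }\right\vert $ is available. Once that is granted, combining the two estimates and factoring out $\frac{1}{b-a}$ finishes the proof.
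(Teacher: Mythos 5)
Your proposal is correct and follows essentially the same route as the paper's own proof: Lemma~1, the triangle inequality, the $m$-convexity bound $\left\vert f^{\prime }(tx+(1-t)a)\right\vert \leq t\left\vert f^{\prime }(x)\right\vert +m(1-t)\left\vert f^{\prime }(\frac{a}{m})\right\vert$, and the elementary integrals $\int_{0}^{1}t(1-t)\,dt=\frac{1}{6}$, $\int_{0}^{1}(1-t)^{2}\,dt=\frac{1}{3}$. Your remark about needing $m>0$ to write $a=m\cdot\frac{a}{m}$ is a fair observation that the paper leaves implicit.
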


\begin{proof}
From Lemma 1 and using property of absolute value, we get;%
\begin{eqnarray*}
&&\left\vert \frac{(b-x)f(b)+(x-a)f(a)}{b-a}-\frac{1}{b-a}%
\dint\limits_{a}^{b}f(u)du\right\vert \\
&\leq &\frac{(x-a)^{2}}{b-a}\dint\limits_{0}^{1}(1-t)\left\vert f^{\prime
}(tx+(1-t)a)\right\vert dt+\frac{(b-x)^{2}}{b-a}\dint\limits_{0}^{1}(1-t)%
\left\vert f^{\prime }(tx+(1-t)b)\right\vert dt.
\end{eqnarray*}%
Since $\left\vert f^{\prime }\right\vert $ is $m-$convex we can write;%
\begin{eqnarray*}
&&\left\vert \frac{(b-x)f(b)+(x-a)f(a)}{b-a}-\frac{1}{b-a}%
\dint\limits_{a}^{b}f(u)du\right\vert \\
&\leq &\frac{(x-a)^{2}}{b-a}\dint\limits_{0}^{1}(1-t)\left[ t\left\vert
f^{\prime }(x)\right\vert +m(1-t)\left\vert f^{\prime }(\frac{a}{m}%
)\right\vert \right] dt \\
&&+\frac{(b-x)^{2}}{b-a}\dint\limits_{0}^{1}(1-t)\left[ t\left\vert
f^{\prime }(x)\right\vert +m(1-t)\left\vert f^{\prime }(\frac{b}{m}%
)\right\vert \right] dt \\
&=&\frac{(x-a)^{2}}{b-a}\left[ \left\vert f^{\prime }(x)\right\vert
\dint\limits_{0}^{1}(t-t^{2})dt+m\left\vert f^{\prime }(\frac{a}{m}%
)\right\vert \dint\limits_{0}^{1}(1-t)^{2}dt\right] \\
&&+\frac{(b-x)^{2}}{b-a}\left[ \left\vert f^{\prime }(x)\right\vert
\dint\limits_{0}^{1}(t-t^{2})dt+m\left\vert f^{\prime }(\frac{b}{m}%
)\right\vert \dint\limits_{0}^{1}(1-t)^{2}dt\right] \\
&=&\frac{(x-a)^{2}}{b-a}\left[ \frac{\left\vert f^{\prime }(x)\right\vert
+2m\left\vert f^{\prime }(\frac{a}{m})\right\vert }{6}\right] +\frac{%
(b-x)^{2}}{b-a}\left[ \frac{\left\vert f^{\prime }(x)\right\vert
+2m\left\vert f^{\prime }(\frac{b}{m})\right\vert }{6}\right] .
\end{eqnarray*}%
This completes the proof.
\end{proof}

\begin{corollary}
In Theorem 3, if we choose $x=\frac{a+b}{2},$ we have%
\begin{equation*}
\left\vert \frac{f(a)+f(b)}{2}-\frac{1}{b-a}\dint\limits_{a}^{b}f(u)du\right%
\vert \leq \frac{b-a}{12}\left[ \left\vert f^{\prime }\left( \frac{a+b}{2}%
\right) \right\vert +m\left\vert f^{\prime }\left( \frac{a}{m}\right)
\right\vert +m\left\vert f^{\prime }\left( \frac{b}{m}\right) \right\vert %
\right] .
\end{equation*}
\end{corollary}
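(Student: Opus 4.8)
The plan is to specialize Theorem 3 to the midpoint choice $x = \frac{a+b}{2}$; the corollary is then a direct consequence, with only elementary simplification required. First I would record the two identities $x - a = \frac{b-a}{2}$ and $b - x = \frac{b-a}{2}$, which give $(x-a)^2 = (b-x)^2 = \frac{(b-a)^2}{4}$ and hence $\frac{(x-a)^2}{b-a} = \frac{(b-x)^2}{b-a} = \frac{b-a}{4}$. I would also note that $\frac{a+b}{2} \in [a,b]$, so Theorem 3 legitimately applies at this value of $x$.

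Next I would simplify the left-hand side of the inequality in Theorem 3. Since the coefficients $b-x$ and $x-a$ are both equal to $\frac{b-a}{2}$, the quantity $\frac{(b-x)f(b) + (x-a)f(a)}{b-a}$ reduces to $\frac{f(a)+f(b)}{2}$, so the left side becomes exactly $\left| \frac{f(a)+f(b)}{2} - \frac{1}{b-a}\int_a^b f(u)\,du \right|$, matching the left side of the corollary.

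Then I would substitute the computed values into the right-hand side of Theorem 3, obtaining $\frac{b-a}{4}\cdot\frac{|f'(\frac{a+b}{2})| + 2m|f'(\frac{a}{m})|}{6} + \frac{b-a}{4}\cdot\frac{|f'(\frac{a+b}{2})| + 2m|f'(\frac{b}{m})|}{6}$. Collecting the two copies of $|f'(\frac{a+b}{2})|$ yields $\frac{b-a}{12}|f'(\frac{a+b}{2})|$ (using $2\cdot\frac{b-a}{4}\cdot\frac{1}{6} = \frac{b-a}{12}$), while each of the terms $2m|f'(\frac{a}{m})|$ and $2m|f'(\frac{b}{m})|$ contributes $\frac{b-a}{12}m|f'(\cdot)|$ (using $\frac{b-a}{4}\cdot\frac{2}{6} = \frac{b-a}{12}$), producing precisely the stated bound $\frac{b-a}{12}\left[|f'(\frac{a+b}{2})| + m|f'(\frac{a}{m})| + m|f'(\frac{b}{m})|\right]$.

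There is no real obstacle here: the statement is a pure specialization of Theorem 3, and the only care needed is in the bookkeeping of the constants $\frac{b-a}{4}$ and $\frac{1}{6}$ when merging the midpoint terms. I would present the argument as a short computation rather than as a separate lemma.
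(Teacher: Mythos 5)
Your proposal is correct and follows exactly the route the paper intends: setting $x=\frac{a+b}{2}$ in Theorem 3, noting $\frac{(x-a)^2}{b-a}=\frac{(b-x)^2}{b-a}=\frac{b-a}{4}$ and that the left side collapses to $\left\vert \frac{f(a)+f(b)}{2}-\frac{1}{b-a}\int_a^b f(u)\,du\right\vert$, then merging constants to get the factor $\frac{b-a}{12}$. The arithmetic checks out, so nothing further is needed.
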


\begin{remark}
In Corollary 1, if we choose $m=1,$ (\ref{1}) inequality is obtained.
\end{remark}

\begin{theorem}
Let $f:I\subset 
\mathbb{R}
\rightarrow 
\mathbb{R}
$ be a differentiable mapping on $I^{o}$ where $a,b\in I$ with $a<b.$ If $%
\left\vert f^{\prime }\right\vert ^{q}$ is $m-$convex function on $[a,b],$
then the following inequality holds:%
\begin{eqnarray*}
&&\left\vert \frac{(b-x)f(b)+(x-a)f(a)}{b-a}-\frac{1}{b-a}%
\dint\limits_{a}^{b}f(u)du\right\vert \\
&\leq &\left( \frac{1}{p+1}\right) ^{\frac{1}{p}}\left[ \frac{(x-a)^{2}}{b-a}%
\left( \frac{\left\vert f^{\prime }(x)\right\vert ^{q}+m\left\vert f^{\prime
}(\frac{a}{m})\right\vert ^{q}}{2}\right) ^{\frac{1}{q}}\right. \\
&&\left. +\frac{(b-x)^{2}}{b-a}\left( \frac{\left\vert f^{\prime
}(x)\right\vert ^{q}+m\left\vert f^{\prime }(\frac{b}{m})\right\vert ^{q}}{2}%
\right) ^{\frac{1}{q}}\right] \text{ \ }
\end{eqnarray*}%
where $m\in (0,1].$
\end{theorem}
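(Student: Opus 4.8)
The plan is to mimic the proof of Theorem 3, replacing the direct estimate of the two weighted integrals by an application of Hölder's inequality; this is the standard device that upgrades a ``$|f'|$ convex'' estimate to a ``$|f'|^{q}$'' one. First I would apply Lemma 1 together with the triangle inequality for integrals, exactly as in the opening lines of the proof of Theorem 3, to reduce the problem to bounding
\[
\frac{(x-a)^{2}}{b-a}\int_{0}^{1}(1-t)\left\vert f^{\prime}(tx+(1-t)a)\right\vert dt+\frac{(b-x)^{2}}{b-a}\int_{0}^{1}(1-t)\left\vert f^{\prime}(tx+(1-t)b)\right\vert dt.
\]

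Next, to each of these two integrals I would apply Hölder's inequality with conjugate exponents $p$ and $q$ (so $\frac{1}{p}+\frac{1}{q}=1$, $q>1$, which is the standing hypothesis implicit in the statement), keeping the \emph{entire} weight $1-t$ on the first factor:
\[
\int_{0}^{1}(1-t)\left\vert f^{\prime}(tx+(1-t)a)\right\vert dt\leq\left(\int_{0}^{1}(1-t)^{p}dt\right)^{\frac{1}{p}}\left(\int_{0}^{1}\left\vert f^{\prime}(tx+(1-t)a)\right\vert^{q}dt\right)^{\frac{1}{q}},
\]
and similarly with $b$ in place of $a$. Since $\int_{0}^{1}(1-t)^{p}dt=\frac{1}{p+1}$, this is the source of the common factor $\left(\frac{1}{p+1}\right)^{1/p}$ appearing in the claimed bound.

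Then I would invoke the $m$-convexity of $\left\vert f^{\prime}\right\vert^{q}$ to estimate the surviving integrands, namely $\left\vert f^{\prime}(tx+(1-t)a)\right\vert^{q}\leq t\left\vert f^{\prime}(x)\right\vert^{q}+m(1-t)\left\vert f^{\prime}(\tfrac{a}{m})\right\vert^{q}$ and likewise with $b$, and integrate using $\int_{0}^{1}t\,dt=\int_{0}^{1}(1-t)\,dt=\tfrac{1}{2}$, which gives $\int_{0}^{1}\left\vert f^{\prime}(tx+(1-t)a)\right\vert^{q}dt\leq\frac{\left\vert f^{\prime}(x)\right\vert^{q}+m\left\vert f^{\prime}(a/m)\right\vert^{q}}{2}$. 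Substituting these bounds back into the Hölder estimates and factoring out $\left(\frac{1}{p+1}\right)^{1/p}$ produces exactly the asserted inequality.

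There is no serious obstacle in this argument; it is a routine combination of Lemma 1, Hölder's inequality, and the defining inequality of $m$-convexity. The only point needing a little care is the bookkeeping in the Hölder split: one must place the full factor $1-t$ (not $(1-t)^{1/p}$, and not split between the two factors) on the side not involving $f^{\prime}$, so that after applying $m$-convexity the elementary integrals $\int_{0}^{1}t\,dt$ and $\int_{0}^{1}(1-t)\,dt$ both equal $\tfrac{1}{2}$ and yield the clean numerator $\left\vert f^{\prime}(x)\right\vert^{q}+m\left\vert f^{\prime}(\cdot/m)\right\vert^{q}$ inside the $q$-th root. It is also worth stating explicitly (the excerpt omits it) that $q>1$ and $p=\frac{q}{q-1}$.
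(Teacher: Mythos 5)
Your proposal is correct and follows essentially the same route as the paper's own proof: Lemma 1 plus the triangle inequality, then H\"{o}lder's inequality with the full weight $(1-t)$ kept (raised to the power $p$) in the first factor so that $\int_{0}^{1}(1-t)^{p}dt=\frac{1}{p+1}$, and finally the $m$-convexity of $\left\vert f^{\prime }\right\vert ^{q}$ yielding the factor $\frac{\left\vert f^{\prime }(x)\right\vert ^{q}+m\left\vert f^{\prime }(\frac{a}{m})\right\vert ^{q}}{2}$ (and likewise with $b$). Your added remark that the statement should explicitly require $q>1$ with $\frac{1}{p}+\frac{1}{q}=1$ is a fair observation about an omission in the theorem's hypotheses, not a gap in the argument.
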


\begin{proof}
From Lemma 1, we can write;%
\begin{eqnarray*}
&&\left\vert \frac{(b-x)f(b)+(x-a)f(a)}{b-a}-\frac{1}{b-a}%
\dint\limits_{a}^{b}f(u)du\right\vert \\
&\leq &\frac{(x-a)^{2}}{b-a}\dint\limits_{0}^{1}(1-t)\left\vert f^{\prime
}(tx+(1-t)a)\right\vert dt \\
&&+\frac{(b-x)^{2}}{b-a}\dint\limits_{0}^{1}(1-t)\left\vert f^{\prime
}(tx+(1-t)b)\right\vert dt.
\end{eqnarray*}%
Using the H\"{o}lder's inequality we have;%
\begin{eqnarray*}
&&\left\vert \frac{(b-x)f(b)+(x-a)f(a)}{b-a}-\frac{1}{b-a}%
\dint\limits_{a}^{b}f(u)du\right\vert \\
&\leq &\frac{(x-a)^{2}}{b-a}\left( \dint\limits_{0}^{1}(1-t)^{p}dt\right) ^{%
\frac{1}{p}}\left( \dint\limits_{0}^{1}\left\vert f^{\prime
}(tx+(1-t)a)\right\vert ^{q}dt\right) ^{\frac{1}{q}} \\
&&+\frac{(b-x)^{2}}{b-a}\left( \dint\limits_{0}^{1}(1-t)^{p}dt\right) ^{%
\frac{1}{p}}\left( \dint\limits_{0}^{1}\left\vert f^{\prime
}(tx+(1-t)b)\right\vert ^{q}dt\right) ^{\frac{1}{q}}.
\end{eqnarray*}%
Since $\left\vert f^{\prime }\right\vert ^{q}$ is $m-$convex function;%
\begin{eqnarray*}
&&\left\vert \frac{(b-x)f(b)+(x-a)f(a)}{b-a}-\frac{1}{b-a}%
\dint\limits_{a}^{b}f(u)du\right\vert \\
&\leq &\frac{(x-a)^{2}}{b-a}\left( \frac{1}{p+1}\right) ^{\frac{1}{p}}\left(
\dint\limits_{0}^{1}\left[ t\left\vert f^{\prime }(x)\right\vert
^{q}+m(1-t)\left\vert f^{\prime }(\frac{a}{m})\right\vert ^{q}\right]
dt\right) ^{\frac{1}{q}} \\
&&+\frac{(b-x)^{2}}{b-a}\left( \frac{1}{p+1}\right) ^{\frac{1}{p}}\left(
\dint\limits_{0}^{1}\left[ t\left\vert f^{\prime }(x)\right\vert
^{q}+m(1-t)\left\vert f^{\prime }(\frac{b}{m})\right\vert ^{q}\right]
dt\right) ^{\frac{1}{q}} \\
&=&\left( \frac{1}{p+1}\right) ^{\frac{1}{p}}\left[ \frac{(x-a)^{2}}{b-a}%
\left( \frac{\left\vert f^{\prime }(x)\right\vert ^{q}+m\left\vert f^{\prime
}(\frac{a}{m})\right\vert ^{q}}{2}\right) ^{\frac{1}{q}}\right. \\
&&\left. +\frac{(b-x)^{2}}{b-a}\left( \frac{\left\vert f^{\prime
}(x)\right\vert ^{q}+m\left\vert f^{\prime }(\frac{b}{m})\right\vert ^{q}}{2}%
\right) ^{\frac{1}{q}}\right] .
\end{eqnarray*}%
This completes the proof.
\end{proof}

\begin{corollary}
In Theorem 4, if we choose $x=\frac{a+b}{2},$ we have%
\begin{eqnarray*}
&&\left\vert \frac{f(a)+f(b)}{2}-\frac{1}{b-a}\dint\limits_{a}^{b}f(u)du%
\right\vert \\
&\leq &\left( \frac{1}{p+1}\right) ^{\frac{1}{p}}.\frac{b-a}{4}\left[ \left( 
\frac{\left\vert f^{\prime }\left( \frac{a+b}{2}\right) \right\vert
^{q}+m\left\vert f^{\prime }\left( \frac{a}{m}\right) \right\vert ^{q}}{2}%
\right) ^{\frac{1}{q}}+\left( \frac{\left\vert f^{\prime }\left( \frac{a+b}{2%
}\right) \right\vert ^{q}+m\left\vert f^{\prime }(\frac{b}{m})\right\vert
^{q}}{2}\right) ^{\frac{1}{q}}\right] .
\end{eqnarray*}
\end{corollary}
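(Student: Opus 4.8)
The plan is to derive this as an immediate specialization of Theorem 4: we simply set $x=\frac{a+b}{2}$ and carry out the resulting arithmetic simplifications, so no fresh analytic estimate is required and the hypotheses on $f$ and on the $m$-convexity of $\left\vert f^{\prime }\right\vert ^{q}$ are inherited verbatim.

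First I would record the elementary consequences of the choice $x=\frac{a+b}{2}$. We have $x-a=\frac{b-a}{2}$ and $b-x=\frac{b-a}{2}$, hence $(x-a)^{2}=(b-x)^{2}=\frac{(b-a)^{2}}{4}$, and therefore
\[
\frac{(x-a)^{2}}{b-a}=\frac{(b-x)^{2}}{b-a}=\frac{b-a}{4}.
\]
Note also that $\frac{a+b}{2}\in \lbrack a,b]$ since $a<b$, so Theorem 4 is indeed applicable with this value of $x$.

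Next I would simplify the left-hand member of the inequality in Theorem 4. Since the two weights $b-x$ and $x-a$ are both equal to $\frac{b-a}{2}$,
\[
\frac{(b-x)f(b)+(x-a)f(a)}{b-a}=\frac{\frac{b-a}{2}\left( f(a)+f(b)\right) }{b-a}=\frac{f(a)+f(b)}{2},
\]
so the quantity inside the absolute value on the left becomes exactly $\frac{f(a)+f(b)}{2}-\frac{1}{b-a}\int_{a}^{b}f(u)\,du$, as in the statement of the corollary.

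Finally, I would substitute the two displays above into the bound of Theorem 4. The factor $\left( \frac{1}{p+1}\right) ^{1/p}$ remains outside the bracket, while each of the two terms in the bracket now carries the coefficient $\frac{b-a}{4}$; pulling $\frac{b-a}{4}$ out of the bracket produces precisely the asserted inequality, with the arguments $\frac{a}{m}$ and $\frac{b}{m}$ appearing just as they do in Theorem 4. There is essentially no obstacle here — the argument is pure bookkeeping — the only point deserving a moment's care being the verification that $x=\frac{a+b}{2}$ is an admissible choice in Theorem 4, which was noted above.
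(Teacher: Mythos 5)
Your proposal is correct and matches the paper's (implicit) argument: the corollary is obtained exactly by substituting $x=\frac{a+b}{2}$ into Theorem 4, noting $\frac{(x-a)^{2}}{b-a}=\frac{(b-x)^{2}}{b-a}=\frac{b-a}{4}$ and that the left-hand side reduces to $\frac{f(a)+f(b)}{2}-\frac{1}{b-a}\dint\limits_{a}^{b}f(u)du$. The bookkeeping you carry out is precisely what the paper intends, so nothing further is needed.
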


In Corollary 2, if we choose $m=1$ and $\left( \frac{1}{p+1}\right) ^{\frac{1%
}{p}}\leq 1,$ we obtain%
\begin{equation*}
\left\vert \frac{f(a)+f(b)}{2}-\frac{1}{b-a}\dint\limits_{a}^{b}f(u)du\right%
\vert \leq \frac{b-a}{4}\left[ \left( \frac{\left\vert f^{\prime }\left( 
\frac{a+b}{2}\right) \right\vert ^{q}+\left\vert f^{\prime }\left( a\right)
\right\vert ^{q}}{2}\right) ^{\frac{1}{q}}+\left( \frac{\left\vert f^{\prime
}\left( \frac{a+b}{2}\right) \right\vert ^{q}+\left\vert f^{\prime
}(b)\right\vert ^{q}}{2}\right) ^{\frac{1}{q}}\right] .
\end{equation*}

\begin{theorem}
Let $f:I\subset 
\mathbb{R}
\rightarrow 
\mathbb{R}
$ be a differentiable mapping on $I^{o}$ where $a,b\in I$ with $a<b.$ If $%
\left\vert f^{\prime }\right\vert ^{q}$ is $m-$convex function on $[a,b]$
for some fixed $m\in (0,1],$ then the following inequality holds:%
\begin{eqnarray*}
&&\left\vert \frac{(b-x)f(b)+(x-a)f(a)}{b-a}-\frac{1}{b-a}%
\dint\limits_{a}^{b}f(u)du\right\vert \\
&\leq &\frac{(x-a)^{2}}{2\left( b-a\right) }\left( \frac{\left\vert
f^{\prime }(x)\right\vert ^{q}+2m\left\vert f^{\prime }(\frac{a}{m}%
)\right\vert ^{q}}{3}\right) ^{\frac{1}{q}} \\
&&+\frac{(b-x)^{2}}{2\left( b-a\right) }\left( \frac{\left\vert f^{\prime
}(x)\right\vert ^{q}+2m\left\vert f^{\prime }(\frac{b}{m})\right\vert ^{q}}{3%
}\right) ^{\frac{1}{q}}.
\end{eqnarray*}
\end{theorem}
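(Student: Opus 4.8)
The plan is to follow the proof of Theorem~4 verbatim up to the point where Hölder's inequality is applied, and then substitute the power-mean (weighted) inequality in its place, which has the advantage of keeping the weight $(1-t)$ on \emph{both} factors. Concretely, Lemma~1 together with the triangle inequality for integrals gives the same starting estimate as in Theorems~3 and~4,
\begin{eqnarray*}
&&\left\vert \frac{(b-x)f(b)+(x-a)f(a)}{b-a}-\frac{1}{b-a}\dint\limits_{a}^{b}f(u)du\right\vert \\
&\leq &\frac{(x-a)^{2}}{b-a}\dint\limits_{0}^{1}(1-t)\left\vert f^{\prime }(tx+(1-t)a)\right\vert dt+\frac{(b-x)^{2}}{b-a}\dint\limits_{0}^{1}(1-t)\left\vert f^{\prime }(tx+(1-t)b)\right\vert dt.
\end{eqnarray*}

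Next, to each of the two integrals I would apply the power-mean inequality $\int_0^1(1-t)h(t)\,dt\le\big(\int_0^1(1-t)\,dt\big)^{1-1/q}\big(\int_0^1(1-t)h(t)^q\,dt\big)^{1/q}$, valid for $q\ge1$ (and trivial when $q=1$), first with $h(t)=|f'(tx+(1-t)a)|$ and then with $h(t)=|f'(tx+(1-t)b)|$. Since $\int_0^1(1-t)\,dt=\frac12$, this pulls out a factor $(\frac12)^{1-1/q}$ in both terms. I would then invoke the $m$-convexity of $|f'|^q$ to bound $|f'(tx+(1-t)a)|^q\le t|f'(x)|^q+m(1-t)|f'(a/m)|^q$ and integrate against the weight $(1-t)$, using the elementary evaluations $\int_0^1 t(1-t)\,dt=\frac16$ and $\int_0^1(1-t)^2\,dt=\frac13$; this yields $\int_0^1(1-t)|f'(tx+(1-t)a)|^q\,dt\le\frac16\big(|f'(x)|^q+2m|f'(a/m)|^q\big)$, and similarly with $b$ in place of $a$.

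Combining the pieces, the contribution of the first term becomes $\frac{(x-a)^2}{b-a}(\frac12)^{1-1/q}\big(\frac16\big)^{1/q}\big(|f'(x)|^q+2m|f'(a/m)|^q\big)^{1/q}$, and analogously for the second. The only step requiring a little care is the arithmetic of the constant: since $(\frac12)^{1-1/q}\big(\frac16\big)^{1/q}=\frac12\big(\frac13\big)^{1/q}$, the first contribution simplifies to $\frac{(x-a)^2}{2(b-a)}\big(\frac{|f'(x)|^q+2m|f'(a/m)|^q}{3}\big)^{1/q}$, which together with the matching second term is exactly the asserted inequality. I do not anticipate any genuine obstacle here — the argument is a routine power-mean variant of Theorem~4 — so the ``hard part'' is merely spotting the identity $(\frac12)^{1-1/q}\big(\frac16\big)^{1/q}=\frac12\big(\frac13\big)^{1/q}$ that makes the stated constant come out clean.
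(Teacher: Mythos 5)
Your proposal is correct and coincides with the paper's own proof of this theorem: the paper likewise starts from Lemma~1, applies the power mean inequality with the weight $(1-t)$ kept on both factors, invokes the $m$-convexity of $\left\vert f^{\prime }\right\vert ^{q}$, and evaluates $\int_{0}^{1}(t-t^{2})dt=\frac{1}{6}$ and $\int_{0}^{1}(1-t)^{2}dt=\frac{1}{3}$, with the same simplification $\left( \frac{1}{2}\right) ^{1-\frac{1}{q}}\left( \frac{1}{6}\right) ^{\frac{1}{q}}=\frac{1}{2}\left( \frac{1}{3}\right) ^{\frac{1}{q}}$ producing the stated constant. No gaps; nothing to add.
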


\begin{proof}
Using Lemma 1, we get;%
\begin{eqnarray*}
&&\left\vert \frac{(b-x)f(b)+(x-a)f(a)}{b-a}-\frac{1}{b-a}%
\dint\limits_{a}^{b}f(u)du\right\vert \\
&\leq &\frac{(x-a)^{2}}{b-a}\dint\limits_{0}^{1}(1-t)\left\vert f^{\prime
}(tx+(1-t)a)\right\vert dt+\frac{(b-x)^{2}}{b-a}\dint\limits_{0}^{1}(1-t)%
\left\vert f^{\prime }(tx+(1-t)b)\right\vert dt.
\end{eqnarray*}%
By the Power mean inequality, we can write;%
\begin{eqnarray*}
&&\left\vert \frac{(b-x)f(b)+(x-a)f(a)}{b-a}-\frac{1}{b-a}%
\dint\limits_{a}^{b}f(u)du\right\vert \\
&\leq &\frac{(x-a)^{2}}{b-a}\left( \dint\limits_{0}^{1}(1-t)dt\right) ^{1-%
\frac{1}{q}}\left( \dint\limits_{0}^{1}(1-t)\left\vert f^{\prime
}(tx+(1-t)a)\right\vert ^{q}dt\right) ^{\frac{1}{q}} \\
&&+\frac{(b-x)^{2}}{b-a}\left( \dint\limits_{0}^{1}(1-t)dt\right) ^{1-\frac{1%
}{q}}\left( \dint\limits_{0}^{1}(1-t)\left\vert f^{\prime
}(tx+(1-t)b)\right\vert ^{q}dt\right) ^{\frac{1}{q}}.
\end{eqnarray*}%
Since $m-$convexity;%
\begin{eqnarray*}
&&\left\vert \frac{(b-x)f(b)+(x-a)f(a)}{b-a}-\frac{1}{b-a}%
\dint\limits_{a}^{b}f(u)du\right\vert \\
&\leq &\frac{(x-a)^{2}}{b-a}\left( \frac{1}{2}\right) ^{1-\frac{1}{q}}\left(
\dint\limits_{0}^{1}(1-t)\left[ t\left\vert f^{\prime }(x)\right\vert
^{q}+m(1-t)\left\vert f^{\prime }(\frac{a}{m})\right\vert ^{q}\right]
dt\right) ^{\frac{1}{q}} \\
&&+\frac{(b-x)^{2}}{b-a}\left( \frac{1}{2}\right) ^{1-\frac{1}{q}}\left(
\dint\limits_{0}^{1}(1-t)\left[ t\left\vert f^{\prime }(x)\right\vert
^{q}+m(1-t)\left\vert f^{\prime }(\frac{b}{m})\right\vert ^{q}\right]
dt\right) ^{\frac{1}{q}} \\
&=&\frac{(x-a)^{2}}{b-a}\left( \frac{1}{2}\right) ^{1-\frac{1}{q}}\left(
\left\vert f^{\prime }(x)\right\vert
^{q}\dint\limits_{0}^{1}(t-t^{2})dt+m\left\vert f^{\prime }(\frac{a}{m}%
)\right\vert ^{q}\dint\limits_{0}^{1}(1-t)^{2}dt\right) ^{\frac{1}{q}} \\
&&+\frac{(b-x)^{2}}{b-a}\left( \frac{1}{2}\right) ^{1-\frac{1}{q}}\left(
\left\vert f^{\prime }(x)\right\vert
^{q}\dint\limits_{0}^{1}(t-t^{2})dt+m\left\vert f^{\prime }(\frac{b}{m}%
)\right\vert ^{q}\dint\limits_{0}^{1}(1-t)^{2}dt\right) ^{\frac{1}{q}} \\
&=&\frac{(x-a)^{2}}{2(b-a)}\left( \frac{\left\vert f^{\prime }(x)\right\vert
^{q}+2m\left\vert f^{\prime }(\frac{a}{m})\right\vert ^{q}}{3}\right) ^{%
\frac{1}{q}} \\
&&+\frac{(b-x)^{2}}{2(b-a)}\left( \frac{\left\vert f^{\prime }(x)\right\vert
^{q}+2m\left\vert f^{\prime }(\frac{b}{m})\right\vert ^{q}}{3}\right) ^{%
\frac{1}{q}}.
\end{eqnarray*}%
This completes the proof.
\end{proof}

\begin{corollary}
In Theorem 5, if we choose $x=\frac{a+b}{2},$ we get%
\begin{eqnarray*}
\left\vert \frac{f(a)+f(b)}{2}-\frac{1}{b-a}\dint\limits_{a}^{b}f(u)du\right%
\vert &\leq &\frac{b-a}{8}\left[ \left( \frac{\left\vert f^{\prime }\left( 
\frac{a+b}{2}\right) \right\vert ^{q}+2m\left\vert f^{\prime }\left( \frac{a%
}{m}\right) \right\vert ^{q}}{3}\right) ^{\frac{1}{q}}\right. \\
&&\left. +\left( \frac{\left\vert f^{\prime }\left( \frac{a+b}{2}\right)
\right\vert ^{q}+2m\left\vert f^{\prime }(\frac{b}{m})\right\vert ^{q}}{3}%
\right) ^{\frac{1}{q}}\right] .
\end{eqnarray*}
\end{corollary}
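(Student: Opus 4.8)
The plan is to obtain this corollary as a direct specialization of Theorem~5, taking the free parameter $x$ to be the midpoint $\frac{a+b}{2}$ of $[a,b]$. No genuinely new estimate is required: everything reduces to substitution and elementary arithmetic simplification of the prefactors already present in Theorem~5.

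First I would record the consequences of setting $x=\frac{a+b}{2}$. Then $x-a=b-x=\frac{b-a}{2}$, so $(x-a)^{2}=(b-x)^{2}=\frac{(b-a)^{2}}{4}$, and the affine combination on the left-hand side of Theorem~5 collapses to the arithmetic mean:
\[
\frac{(b-x)f(b)+(x-a)f(a)}{b-a}=\frac{\frac{b-a}{2}\,f(b)+\frac{b-a}{2}\,f(a)}{b-a}=\frac{f(a)+f(b)}{2}.
\]
Hence the left-hand side of the inequality in Theorem~5 becomes exactly $\left\vert \frac{f(a)+f(b)}{2}-\frac{1}{b-a}\int_{a}^{b}f(u)\,du\right\vert$, which is the quantity appearing in the corollary.

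Next I would simplify the right-hand side. Since $\frac{(x-a)^{2}}{2(b-a)}=\frac{(b-a)^{2}/4}{2(b-a)}=\frac{b-a}{8}$, and likewise $\frac{(b-x)^{2}}{2(b-a)}=\frac{b-a}{8}$, both summands carry the common factor $\frac{b-a}{8}$. Substituting $x=\frac{a+b}{2}$ into the arguments $\left\vert f^{\prime }(x)\right\vert ^{q}$ turns them into $\left\vert f^{\prime }\!\left(\frac{a+b}{2}\right)\right\vert ^{q}$, while the terms $2m\left\vert f^{\prime }(\frac{a}{m})\right\vert ^{q}$ and $2m\left\vert f^{\prime }(\frac{b}{m})\right\vert ^{q}$ are untouched. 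Pulling out $\frac{b-a}{8}$ and grouping the two $q$-power-mean factors inside a bracket yields precisely the claimed bound.

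There is essentially no obstacle here; the only thing to watch is the bookkeeping of the midpoint substitution — checking that the two geometric factors $(x-a)^{2}$ and $(b-x)^{2}$ really do coincide and collapse the prefactor $\frac{1}{2(b-a)}$ to $\frac{b-a}{8}$, and that the weighted combination of $f(a)$ and $f(b)$ becomes the ordinary average. Once those are verified, the corollary is simply Theorem~5 rewritten at $x=\frac{a+b}{2}$.
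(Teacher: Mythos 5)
Your proof is correct and matches the paper's intent exactly: the corollary is obtained precisely by substituting $x=\frac{a+b}{2}$ into Theorem 5, noting $(x-a)^{2}=(b-x)^{2}=\frac{(b-a)^{2}}{4}$ so that $\frac{(x-a)^{2}}{2(b-a)}=\frac{b-a}{8}$ and the weighted combination of $f(a)$ and $f(b)$ reduces to $\frac{f(a)+f(b)}{2}$. The paper offers no further argument for this corollary, so your substitution-and-simplification route is the same as (and a fully explicit version of) the paper's.
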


\begin{remark}
In Corollary 3, if we choose $m=1,$ (\ref{2}) inequality is obtained.
\end{remark}

\section{APPLICATIONS TO SPECIAL MEANS}

We now consider the means for arbitrary real numbers $\alpha ,\beta $ $%
(\alpha \neq \beta ).$ We take

\begin{enumerate}
\item $Arithmetic$ $mean:$%
\begin{equation*}
A(\alpha ,\beta )=\frac{\alpha +\beta }{2},\text{ \ }\alpha ,\beta \in 
\mathbb{R}
^{+}.
\end{equation*}

\item $Logarithmic$ $mean$:%
\begin{equation*}
L(\alpha ,\beta )=\frac{\alpha -\beta }{\ln \left\vert \alpha \right\vert
-\ln \left\vert \beta \right\vert },\text{ \ \ }\left\vert \alpha
\right\vert \neq \left\vert \beta \right\vert ,\text{ }\alpha ,\beta \neq 0,%
\text{ }\alpha ,\beta \in 
\mathbb{R}
^{+}.
\end{equation*}

\item $Generalized$ $log-mean$:%
\begin{equation*}
L_{n}(\alpha ,\beta )=\left[ \frac{\beta ^{n+1}-\alpha ^{n+1}}{(n+1)(\beta
-\alpha )}\right] ^{\frac{1}{n}},\text{ \ \ \ }n\in 
\mathbb{Z}
\backslash \{-1,0\},\text{ }\alpha ,\beta \in 
\mathbb{R}
^{+}.
\end{equation*}
\end{enumerate}

Now using the results of Section 2, we give some applications for special
means of real numbers.

\begin{proposition}
Let $a,b\in 
\mathbb{R}
^{+}$, $a<b,$ $m\in \lbrack 0,1]$ and $n\in 
\mathbb{Z}
,n>1.$ Then, we have%
\begin{equation*}
\left\vert A(a^{n},b^{n})-L_{n}^{n}(a,b)\right\vert \leq n.\frac{b-a}{12}%
\left[ \left\vert \frac{a+b}{2}\right\vert ^{n-1}+m\left\vert \frac{a}{m}%
\right\vert ^{n-1}+m\left\vert \frac{b}{m}\right\vert ^{n-1}\right] .
\end{equation*}%
If we choose $m=1,$ we obtain%
\begin{equation*}
\left\vert A(a^{n},b^{n})-L_{n}^{n}(a,b)\right\vert \leq n.\frac{b-a}{12}%
\left[ \left\vert \frac{a+b}{2}\right\vert ^{n-1}+\left\vert a\right\vert
^{n-1}+\left\vert b\right\vert ^{n-1}\right] .
\end{equation*}
\end{proposition}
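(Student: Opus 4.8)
The plan is to apply Corollary 1 to a carefully chosen function on the interval $[a,b]$. Take $f(x) = x^n$ for $n > 1$; this is differentiable on $\mathbb{R}^+$ with $f'(x) = n x^{n-1}$, so $\lvert f'(x)\rvert = n \lvert x\rvert^{n-1}$. First I would verify that $\lvert f'\rvert$ is $m$-convex on $[a,b]$ for every $m \in (0,1]$: since $x \mapsto x^{n-1}$ is convex on $\mathbb{R}^+$ (as $n-1 > 0$ and in fact $n-1 \geq 1$ because $n \in \mathbb{Z}$, $n>1$), and convex functions that are nonnegative with $f(0)\geq 0$... actually the cleanest route is to note that any nonnegative convex function $g$ on $[0,b]$ with $g(0)\geq 0$ satisfies $g(tx+m(1-t)y) \leq tg(x) + (1-t)g(my) \leq tg(x) + m(1-t)g(y)$, where the first inequality is convexity and the second uses $g(my)\leq m g(y) + (1-m)g(0)$... hmm, this needs $g(0)=0$ or a small argument. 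Since $\lvert f'(x)\rvert = n x^{n-1}$ vanishes at $0$, this works: $1$-convexity (ordinary convexity) plus $g(0)=0$ implies $m$-convexity for all $m\in[0,1]$, by the standard fact recorded in Toader's work. I would state this briefly as the first step.

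Second, I would identify the two sides of Corollary 1 with the quantities in the Proposition. The integral term is
\begin{equation*}
\frac{1}{b-a}\int_a^b x^n\, dx = \frac{b^{n+1}-a^{n+1}}{(n+1)(b-a)} = L_n^n(a,b),
\end{equation*}
by the definition of the generalized log-mean, and the endpoint term is
\begin{equation*}
\frac{f(a)+f(b)}{2} = \frac{a^n + b^n}{2} = A(a^n,b^n).
\end{equation*}
So the left-hand side of Corollary 1 is exactly $\lvert A(a^n,b^n) - L_n^n(a,b)\rvert$.

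Third, I would substitute $f'$ into the right-hand side of Corollary 1:
\begin{equation*}
\frac{b-a}{12}\left[\left\lvert f'\!\left(\tfrac{a+b}{2}\right)\right\rvert + m\left\lvert f'\!\left(\tfrac{a}{m}\right)\right\rvert + m\left\lvert f'\!\left(\tfrac{b}{m}\right)\right\rvert\right] = \frac{b-a}{12}\left[n\left\lvert\tfrac{a+b}{2}\right\rvert^{n-1} + mn\left\lvert\tfrac{a}{m}\right\rvert^{n-1} + mn\left\lvert\tfrac{b}{m}\right\rvert^{n-1}\right],
\end{equation*}
and factoring out $n$ gives precisely the claimed bound. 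The specialization $m=1$ is then immediate, since $\lvert a/1\rvert^{n-1} = \lvert a\rvert^{n-1}$ and likewise for $b$.

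The only genuine subtlety — hence the "main obstacle" — is the verification that $\lvert f'\rvert = n\lvert x\rvert^{n-1}$ is $m$-convex on $[a,b]$ (equivalently on $[0,b]$) for the full range $m\in(0,1]$, rather than merely for $m=1$; this rests on the elementary observation that a nonnegative convex function vanishing at the origin is automatically $m$-convex for every $m\in[0,1]$. Everything else is bookkeeping: matching definitions and a one-line substitution. I would therefore open the proof by citing Corollary 1, dispatch the $m$-convexity check in a sentence, and then display the three identifications above in sequence.
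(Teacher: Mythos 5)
Your proposal is correct and takes essentially the same approach as the paper: the paper's proof is simply ``apply Corollary 1 to the mapping $f(x)=x^{n}$,'' which is exactly what you do. Your explicit verification that $\left\vert f^{\prime }(x)\right\vert =nx^{n-1}$ is $m$-convex (a nonnegative convex function vanishing at the origin is $m$-convex for every $m\in \lbrack 0,1]$) is sound and merely fills in a detail the paper leaves implicit.
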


\begin{proof}
The assertion follows from Corollary 1 applied to the m-convex mapping $%
f(x)=x^{n},$ $x\in 
\mathbb{R}
,$ $n\in 
\mathbb{Z}
.$
\end{proof}

\begin{proposition}
Let $a,b\in 
\mathbb{R}
^{+}$, $a<b,$ $m\in \lbrack 0,1]$ and $n\in 
\mathbb{Z}
\backslash \{-1,0\}$ $.$ Then, for all $q>1$, we have%
\begin{equation*}
\left\vert A(a^{n},b^{n})-L_{n}^{n}(a,b)\right\vert \leq n.\frac{b-a}{8}%
\left( \left[ \frac{\left\vert \frac{a+b}{2}\right\vert
^{q(n-1)}+2m\left\vert \frac{a}{m}\right\vert ^{q(n-1)}}{3}\right] ^{\frac{1%
}{q}}+\left[ \frac{\left\vert \frac{a+b}{2}\right\vert
^{q(n-1)}+2m\left\vert \frac{b}{m}\right\vert ^{q(n-1)}}{3}\right] ^{\frac{1%
}{q}}\right) .
\end{equation*}%
If we choose $m=1,$ we obtain%
\begin{equation*}
\left\vert A(a^{n},b^{n})-L_{n}^{n}(a,b)\right\vert \leq n.\frac{b-a}{8}%
\left( \left[ \frac{\left\vert \frac{a+b}{2}\right\vert
^{q(n-1)}+2\left\vert a\right\vert ^{q(n-1)}}{3}\right] ^{\frac{1}{q}}+\left[
\frac{\left\vert \frac{a+b}{2}\right\vert ^{q(n-1)}+2\left\vert b\right\vert
^{q(n-1)}}{3}\right] ^{\frac{1}{q}}\right) .
\end{equation*}
\end{proposition}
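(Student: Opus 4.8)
The plan is to derive this proposition as a direct specialization of Corollary 3 to the mapping $f(x)=x^{n}$ on $[a,b]\subset\mathbb{R}^{+}$. First I would check that $f$ meets the hypotheses of Corollary 3: it is differentiable with $f^{\prime}(x)=nx^{n-1}$, which is continuous and hence in $L[a,b]$, and $\left\vert f^{\prime}(x)\right\vert ^{q}=n^{q}\left\vert x\right\vert ^{q(n-1)}$ is $m$-convex on $[a,b]$. For the last point I would observe that $g(x)=\left\vert x\right\vert ^{q(n-1)}$ is convex on the relevant range and satisfies $g(0)\leq 0$; a convex function $g$ with $g(0)\leq 0$ is automatically $m$-convex, since $g(tx+m(1-t)y)\leq tg(x)+(1-t)g(my)$ by convexity and $g(my)\leq mg(y)+(1-m)g(0)\leq mg(y)$, and multiplication by the positive constant $n^{q}$ preserves $m$-convexity.

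Next I would identify the two sides of Corollary 3 under this substitution. On the left, $\frac{f(a)+f(b)}{2}=\frac{a^{n}+b^{n}}{2}=A(a^{n},b^{n})$, while, straight from the definition of the generalized log-mean,
\[
\frac{1}{b-a}\int_{a}^{b}x^{n}\,dx=\frac{b^{n+1}-a^{n+1}}{(n+1)(b-a)}=L_{n}^{n}(a,b),
\]
so the left-hand side of Corollary 3 becomes exactly $\left\vert A(a^{n},b^{n})-L_{n}^{n}(a,b)\right\vert$; here the exclusions $n\neq -1,0$ enter, $n\neq -1$ for the antiderivative and $n\neq 0$ so that $L_{n}$ is defined. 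On the right, I would substitute $f^{\prime}\left(\frac{a+b}{2}\right)=n\left(\frac{a+b}{2}\right)^{n-1}$, $f^{\prime}\left(\frac{a}{m}\right)=n\left(\frac{a}{m}\right)^{n-1}$ and $f^{\prime}\left(\frac{b}{m}\right)=n\left(\frac{b}{m}\right)^{n-1}$; each of the two bracketed weighted means then carries a common factor $n^{q}$, which comes out of the $\tfrac{1}{q}$-th power as $n$ and combines with the prefactor $\frac{b-a}{8}$ to produce $n\cdot\frac{b-a}{8}$ in front of the asserted bracket.

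Finally, setting $m=1$ (so that $\frac{a}{m}=a$ and $\frac{b}{m}=b$) in the inequality just obtained yields the second displayed bound and completes the proof. I do not expect a genuine obstacle here: the evaluation of the integral and the substitution of the derivative values are bookkeeping, and the only step deserving a word of care is the verification that $\left\vert f^{\prime}\right\vert ^{q}$ is $m$-convex on $[a,b]$, which is covered by the convexity-plus-$g(0)\leq 0$ remark above (with the tacit restriction, as in Proposition 1, to those $n$ for which $x\mapsto x^{q(n-1)}$ is convex on the range in play).
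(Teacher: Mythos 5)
Your proposal is correct and follows exactly the paper's route: the paper's entire proof is the one-line observation that the assertion follows from Corollary 3 applied to $f(x)=x^{n}$, and you carry out that same specialization, merely filling in the bookkeeping (hypothesis check, evaluation of $\frac{1}{b-a}\int_{a}^{b}x^{n}dx=L_{n}^{n}(a,b)$, extraction of the factor $n$) that the paper leaves implicit.
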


\begin{proof}
The assertion follows from Corollary 3 applied to the m-convex mapping $%
f(x)=x^{n},$ $x\in 
\mathbb{R}
,$ $n\in 
\mathbb{Z}
.$
\end{proof}

\end{document}